\setlist[itemize]{leftmargin=*} 
\setlist[enumerate]{leftmargin=*}
\theoremstyle{plain}
\newtheorem{theorem}{Theorem}
\newtheorem{claim}[theorem]{Claim}
\newtheorem{proposition}[theorem]{Proposition}
\theoremstyle{definition}
\newtheorem*{defn*}{Definition}
\newcommand{\calT}{\mathcal{T}}
\newcommand{\mc}{\mathrm{mc}}
\author{Domagoj Brada\v{c}\thanks{Institute of Mathematics, EPFL, Lausanne, Switzerland. Email: \textbf{domagoj.bradac@epfl.ch} This work was done while the author was at the Department of Mathematics, ETH Z\"urich, Z\"urich, Switzerland. Research supported in part by SNSF grant 200021-228014.}}
\title{On a question of Erd\H{o}s and Ne\v{s}et\v{r}il about minimal cuts in a graph}
\date{}
\begin{document}
    \maketitle

    \begin{abstract}
        Answering a question of Erd\H{o}s and Ne\v{s}et\v{r}il, we show that the maximum number of inclusion-wise minimal vertex cuts in a graph on $n$ vertices is at most $1.8899^n$ for large enough $n$.
        
        \noindent\textbf{Keywords}: graph theory, graph cuts
    \end{abstract}
    
    Let $c(n)$ denote the maximum possible number of inclusion-wise minimal vertex cuts in a graph on $n$ vertices. Erd\H{o}s and Ne\v{s}et\v{r}il~\cite{erdos} asked to estimate $c(n)$. Denoting $\alpha \coloneqq \lim_n c(n)^{1/n}$ (we shall show the limit exists), they asked whether $\alpha < 2$. This problem also appears as problem \#150 on Thomas Bloom's Erd\H{o}s problems website~\cite{bloom-website}. We answer this question in the affirmative. More precisely, we prove the following.

    \begin{theorem} \label{thm:main}
        The maximum possible number of inclusion-wise minimal vertex cuts in a graph on $n$ vertices is at most $2^{(1+o(1)) H(1/3) n},$ where $H(x)$ is the binary entropy function. 
    \end{theorem}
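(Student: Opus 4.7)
The plan is to encode every minimal vertex cut by a subset of $V(G)$ of size at most $\lceil n/3\rceil$, in such a way that each such subset is used for at most two cuts. Once this is set up, the bound reduces to the standard entropy estimate $\sum_{k\le n/3}\binom{n}{k}=2^{(1+o(1))H(1/3)n}$.

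The underlying structural fact I will use is this: if $S$ is a minimal cut of a connected graph $G$ and $C_1,\dots,C_k$ (with $k\ge 2$) are the components of $G-S$, then for each $v\in S$ the graph $G-(S\setminus\{v\})$ must be connected, which forces $v$ to have a neighbor in every $C_i$ (otherwise, adding $v$ back would leave some component isolated from the rest). In particular, for every single component $C$ of $G-S$ one has
\[
S \;=\; N(C)\setminus C,
\]
since $S\subseteq N(C)\setminus C$ by the above, and any vertex outside $C\cup S$ lies in a different component of $G-S$ and hence is not adjacent to $C$. Thus $S$ is uniquely determined by any single one of the components of $G-S$.

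Given this, I would encode a minimal cut $S$ as follows. If $|S|\le n/3$, set $T(S):=S$. If $|S|>n/3$, then $|V\setminus S|<2n/3$; since $G-S$ has at least two components, the smallest one $C_j$ has at most $(n-|S|)/2<n/3$ vertices, and I set $T(S):=C_j$. In either case $|T(S)|\le \lceil n/3\rceil$. To decode a set $T$, the only possible preimages are $T$ itself (if it came from the first case) and $N(T)\setminus T$ (if it came from the second, by the structural identity above). Hence $S\mapsto T(S)$ is at most $2$-to-$1$, and the number of minimal cuts is at most
\[
2\sum_{k=0}^{\lceil n/3\rceil}\binom{n}{k}\;\le\; 2\cdot 2^{H(1/3)n}\;=\;2^{(1+o(1))H(1/3)n}.
\]

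The only point requiring care is the structural identity $S=N(C)\setminus C$ for every component $C$ of $G-S$, which is what makes the decoding unambiguous. Since this identity is an immediate consequence of minimality, there is essentially no obstacle to this plan; the entire proof should fit in a few lines once the observation is isolated.
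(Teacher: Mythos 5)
Your proof is correct and follows essentially the same route as the paper: identify a minimal cut with the outer neighbourhood of a component of its complement, observe that either the cut or one of those components has size at most roughly $n/3$, and conclude with the $2\sum_{k\le n/3}\binom{n}{k}=2^{(1+o(1))H(1/3)n}$ bound via an (at most) $2$-to-$1$ encoding. The only cosmetic differences are that the paper phrases everything in terms of minimal $u$-$v$ separators (using the pairwise disjointness of $S_u$, $S_v$ and $T$ to extract a small set, since your stronger claim that every cut vertex sees every component holds for minimal cuts of a connected graph but not for minimal separators), whereas you take the smallest component of $G-S$ directly; both yield the same threshold and the same final count.
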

    In other words, $\alpha \le 2^{H(1/3)} < 1.8899$. 

    On the other hand, Erd\H{o}s communicated the following construction of Seymour~\cite{erdos} which shows that $\alpha \ge 3^{1/3} > 1.4422.$ Let $G$ be a graph on $3m + 2$ vertices with two distinguished vertices connected by $m$ internally vertex-disjoint paths of length $4$. We can obtain $3^m$ minimal vertex cuts by taking exactly one internal vertex from each of these paths. So $\alpha \ge \lim_{m} 3^{m/(3m+2)} = 3^{1/3}.$

    After the publication of this article, the author was informed by Hans Raj Tiwary that the present results had already been known prior to this work. Fomin, Kratsch, Todinca and Villanger~\cite{fomin-kratsch} first proved that $\alpha < 2$, in fact, they showed $\alpha \le 1.7087$. The current best known bounds on $\alpha$ are
    \[ 1.4457 \le \alpha \le \frac{1 + \sqrt{5}}{2} \approx 1.618, \]
    where the upper bound was proved by Fomin and Villanger~\cite{fomin-villanger} and the lower bound by Gaspers and Mackenzie~\cite{gaspers-mackenzie} who also gave a simpler proof of the upper bound in~\cite{fomin-villanger}.
    
    It will be more convenient to work with a closely related function. Given a graph $G$ with two distinct vertices $u$ and $v,$ let $\mc_{u,v}(G)$ denote the number of inclusion-wise minimal sets separating $u$ and $v$ in $G$. More formally, $\mc_{u,v}(G)$ is the number of sets $T \subseteq V(G) \setminus \{u, v\}$ such that $u$ and $v$ are in different connected components of $G \setminus T,$ but $u$ and $v$ are in the same connected component of $G \setminus T'$ for any $T' \subsetneq T.$ Let $g(n)$ denote the maximum value of $\mc_{u,v}(G)$ across all choices of a graph $G$ on $n+2$ vertices and a pair of distinct vertices $u,v \in V(G)$.
    
    Clearly $g(n-2) \le c(n) \le \binom{n}{2} g(n-2),$ so $\lim_n c(n)^{1/n}$ exists if and only if $\lim_n g(n)^{1/n}$ exists in which case they are equal. With this notation, Seymour's construction actually shows that $g(3m) \ge 3^{m}$.   

    For completeness, we show that $\alpha$ is well-defined.
    \begin{proposition} \label{prop:limit-exists}
        The limit $\lim_n g(n)^{1/n}$ exists.
    \end{proposition}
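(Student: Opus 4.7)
The plan is to show that $g$ is supermultiplicative, that is $g(m+n) \ge g(m)\,g(n)$ for all $m,n \ge 1$, and then deduce the claim via the multiplicative form of Fekete's lemma. The only nontrivial step is the supermultiplicativity: from extremal pairs $(G_i, u_i, v_i)$ witnessing $g(m_i)$ on $m_i + 2$ vertices for $i = 1, 2$, I need to produce a graph on $m_1 + m_2 + 2$ vertices with at least $g(m_1)\,g(m_2)$ minimal cuts between a distinguished pair.

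The construction I have in mind is a parallel composition: take disjoint copies of $G_1$ and $G_2$ and identify the sources $u_1 = u_2 =: u$ and the sinks $v_1 = v_2 =: v$, obtaining a graph $G$ on $m_1 + m_2 + 2$ vertices with $u$ and $v$ distinguished. Since $u$ and $v$ are the only shared vertices and no new edges are added, every $u$-$v$ path in $G$ lies entirely in $V(G_1)$ or entirely in $V(G_2)$. Hence for any $T \subseteq V(G) \setminus \{u, v\}$, writing $T_i := T \cap V(G_i)$, the set $T$ separates $u$ from $v$ in $G$ if and only if $T_1$ separates $u_1$ from $v_1$ in $G_1$ \emph{and} $T_2$ separates $u_2$ from $v_2$ in $G_2$. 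Moreover, since removing a single vertex $x \in T_1$ leaves $T_2$ unchanged as a cut of $G_2$, the set $T \setminus \{x\}$ is a cut in $G$ iff $T_1 \setminus \{x\}$ is a cut in $G_1$; the symmetric statement holds for $x \in T_2$. So $T$ is a minimal $u$-$v$ cut in $G$ iff both $T_1$ and $T_2$ are minimal in their respective graphs, giving a bijection with the product of the two families of minimal cuts. Consequently $g(m_1 + m_2) \ge \mc_{u,v}(G) = g(m_1)\,g(m_2)$.

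With supermultiplicativity established, $\log g(n)$ is a superadditive real sequence bounded above by $n \log 2$ (each cut is a subset of the $n$ non-distinguished vertices), so the superadditive form of Fekete's lemma yields $\lim_n \tfrac{1}{n} \log g(n) = \sup_n \tfrac{1}{n} \log g(n)$, and therefore $\lim_n g(n)^{1/n}$ exists. I do not foresee any genuine obstacle; the one point deserving care is the factorization of the minimality condition on $T$ into independent conditions on $T_1$ and $T_2$, which follows cleanly from the fact that the two sides communicate only through $u$ and $v$.
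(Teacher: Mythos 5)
Your proposal is correct and follows essentially the same route as the paper: the identical parallel composition (identifying $u_1$ with $u_2$ and $v_1$ with $v_2$), the same factorization of minimal $u$-$v$ cuts into pairs of minimal cuts of $G_1$ and $G_2$, and the same appeal to Fekete's lemma with the trivial bound $g(n) \le 2^n$. The extra detail you give on why minimality factors (using that being a separator is upward closed, so single-vertex removals suffice) is a correct elaboration of a step the paper states without proof.
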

    \begin{proof}
        Clearly $g(n) \le 2^n$ so the sequence $g(n)^{1/n}$ is bounded above by $2$. 
        
        We claim that for any $n, m \ge 1,$ it holds that $g(n+m) \ge g(n) g(m)$. Indeed, let $G_1$ be a graph on $n+2$ vertices with $g(n)$ minimal vertex cuts separating vertices $u_1$ and $v_1$ and let $G_2$ be a graph on $m+2$ vertices with $g(m)$ minimal vertex cuts separating $u_2$ and $v_2$. Let $G$ be the graph obtained by taking the disjoint union of $G_1$ and $G_2$ and merging the pair of vertices $u_1, u_2$ into a vertex $u$ as well as the pair of vertices $v_1, v_2$ into a vertex $v$. Then, a set $T$ is a minimal set separating $u$ and $v$ in $G$ if and only if for $i \in [2],$ the set $T \cap V(G_i)$ is a minimal set separating $u_i$ and $v_i$ in $G_i$. Therefore, $\mc_{u,v}(G) = g(n) g(m)$ and $G$ clearly has $n+2 + m+2 - 2 = n + m + 2$ vertices.

        By Fekete's lemma, $\lim_n g(n)^{1/n}$ exists, finishing the proof.
    \end{proof}

    By the above discussion, we have
    \begin{equation} \label{eq:equal-limits}
        \alpha = \lim_n c(n)^{1/n} = \lim_n g(n)^{1/n}.
    \end{equation}
    
    \begin{proof}[Proof of Theorem~\ref{thm:main}]
        By \eqref{eq:equal-limits}, it is enough to show that $g(n) \le 2^{(1+o(1))H(1/3) n}.$
        
        Let $G$ be a graph with $n+2$ vertices and let $u, v$ be two fixed vertices of $G$. Let $T$ be an arbitrary inclusion-wise minimal set separating $u$ and $v$ in $G$. Let $S_u$ be the component containing $u$ in $G \setminus T$ and let $S_v$ be the component containing $v$ in $G \setminus T$. Clearly $S_u, S_v, T$ are pairwise disjoint.

        Let $N(X)$ denote the outer neighbourhood\footnote{The outer neighbourhood of $X$ is the set of all vertices in $V(G) \setminus X$ that have a neighbour in $X$.} of $X$ in $G$. 

        \begin{claim}
            $N(S_u) = N(S_v) = T.$
        \end{claim}
        \begin{proof}
            We show that $N(S_u) = T$ and then $N(S_v) = T$ follows analogously. Indeed, suppose first there is a vertex $x \in N(S_u) \setminus T.$ Then there is a vertex $y \in S_u$ such that $yx \in E(G)$, implying that $x \in S_u$, a contradiction. This shows that $T \supseteq N(S_u)$. Now assume that there is a vertex $x \in T \setminus N(S_u).$ Let $T' = T \setminus \{x\}.$ Then, $T'$ also separates $u$ and $v$ in $G$. Indeed, as $x$ has no neighbours in $S_u$, the component of $u$ in the graph $G \setminus T'$ equals $S_u$. In particular, it does not contain $v$, so $T'$ also separates $u$ and $v$, contradicting the minimality of $T$. This shows that $T \subseteq N(S_u)$. We conclude that $N(S_u) = T$.
        \end{proof}
        
        As $S_u, S_v, T$ are disjoint, one of these sets has size at most $m \coloneq \lfloor(n+2) / 3 \rfloor.$ Thus $T$ has size at most $m$ or is equal to $N(S)$ for some set $S$ of size at most $m$.
        
        Let $\binom{V(G)}{\le m}$ denote the set of all subsets of $V(G)$ of size at most $m$. Letting $\calT$ denote the set of all minimal sets separating $u$ and $v$, we have shown that

        \[ \calT \subseteq \binom{V(G)}{\le m} \cup \left\{ N(S) \, \vert \, S \in \binom{V(G)}{\le m} \right\}. \]
        
        This implies that 
        \[ \mc_{u,v}(G) = |\calT| \le 2 \sum_{k=0}^{m} \binom{n+2}{k} \le 2 \cdot 2^{H(m / (n+2))} = 2^{(1+o(1)) H(1/3) n}. \qedhere \]
    \end{proof}    
    
    \textbf{Acknowledgement} I would like to thank Hans Raj Tiwary for informing me that the results of this paper had been known prior to my work and for pointing me to reference~\cite{gaspers-mackenzie}. I am grateful to the anonymous referee for their comments that improved the presentation of the paper as well as Yuval Wigderson for helpful discussions about the problem and for valuable comments on an earlier draft.

    \bibliographystyle{plain}
    \bibliography{references}
\end{document}